\documentclass[11pt,a4paper]{article}

\usepackage{epsf,epsfig,amsfonts,amsgen,amsmath,amstext,amsbsy,amsopn,amsthm,cases,listings,color}
\usepackage{ebezier,eepic}
\usepackage{color}
\usepackage{multirow}
\usepackage{epstopdf}
\usepackage{graphicx}   
\usepackage{pgf,tikz}
\usepackage{mathrsfs}
\usepackage[marginal]{footmisc}
\usepackage{enumerate}
\usepackage{enumitem}
\usepackage[titletoc]{appendix}
\usepackage{booktabs}
\usepackage{url}
\usepackage{mathtools}
\usepackage{pdflscape} 
\usepackage{pgfplots}
\usepackage{authblk}
\usepackage{amssymb}
\usepackage{wasysym}
\usepackage{empheq}
\usepackage{dsfont}
\usepackage{tikz}
\usepackage{longtable}
\usepackage{float}
\usepackage{subfigure}
\pgfplotsset{compat=1.18}
\usepackage{mathrsfs}
\usepackage{wasysym} 
\usetikzlibrary{arrows}
\usepackage{aligned-overset}
\usepackage{bm}
\usepackage{bbm}
\usepackage[T1]{fontenc}
\usepackage{amsmath}
\usepackage{xcolor}
\usepackage[colorlinks=true,linkcolor=blue!55!black,citecolor=green!45!black,urlcolor=blue!55!black]{hyperref}
\usepackage[nameinlink,noabbrev]{cleveref}

\allowdisplaybreaks[1]

\newtheorem{definition}{Definition} [section]
\newtheorem{theorem}[definition]{Theorem}

\newtheorem{proposition}[definition]{Proposition}

\newtheorem{claim}[definition]{Claim}

\newcommand{\one}{\mathbf 1}
\newcommand{\Frob}{\mathrm F}

\newcommand{\diag}{\operatorname{diag}}
\newcommand{\bip}{\mathrm{bip}}

\newcommand{\avd}{\overline d}
\newcommand{\navd}{\widetilde d}

\begin{document}
\title{\bf\Large Local and global average degree in bipartite graphs}
\author{
Jianfeng Hou\thanks{
Email: jfhou@fzu.edu.cn},\quad \quad 
Hongbin Zhao\thanks{
Email: hbzhao2024@163.com }
\\
\small Center for Discrete Mathematics, Fuzhou University, Fujian, 350108, China
}
\date{}
\maketitle
\begin{abstract}
Let $F_{\mathrm{bip}}(n)$ denote the maximum, over all $n$-vertex bipartite graphs without isolated vertices, of the ratio of the minimum local average degree to the global average degree. We prove that $F_{\bip}(n)=\frac14\sqrt n+\frac38+o(1)$. 
This answers a problem posed by Tuza.
\medskip
\end{abstract}

\section{Introduction}

For a finite simple graph $G$ with no isolated vertices, the \emph{global average degree} of $G$ and the \emph{local average degree} at a vertex $v$ are defined by 
\[
 \avd(G)=\frac{1}{|V(G)|}\sum_{v\in V(G)}d_G(v)
       =\frac{2|E(G)|}{|V(G)|},
 \qquad
 \navd_G(v)=\frac{1}{d_G(v)}\sum_{u\in N_G(v)}d_G(u).
\]
The comparison between local and global average degrees was initiated by
Bertram, Erd\H{o}s, Hor\'ak, \v{S}ir\'a\v{n}, and Tuza~\cite{BEHST}, who proved that, over all $n$-vertex graphs without isolated vertices,
\[
 \max_G
 \frac{\min_{v\in V(G)}\widetilde d_G(v)}
      {\overline d(G)}
 =
 \frac14\sqrt{2n}+O(1).
\]
Poljak, Szab\'o, and Tuza~\cite{PST} subsequently studied further extremal and convergence properties of local average degrees. Tuza~\cite[Problem~17]{TuzaProblems} later singled out the corresponding bipartite extremal problem in his collection of open combinatorial problems.

Set $q(G)=\min_{v\in V(G)}\navd_G(v)$ and define
\[
 F_{\bip}(n)=
 \max_{\substack{|V(G)|=n\\G\text{ bipartite, no isolates}}}
 \frac{q(G)}{\avd(G)}.
\]
When formulating this problem, Tuza~\cite{TuzaProblems} proposed the following construction: take a complete bipartite graph $K_{p,p}$, where $2p=\Theta(\sqrt n)$, as the core, and attach each of the remaining vertices as a leaf to one of the core vertices, distributing them as evenly as possible.
This construction gives $F_{\bip}(n)\ge \frac14\sqrt n-1$.
He asked whether this construction is asymptotically extremal and, more specifically, whether the above lower bound is tight up to an additive constant. We answer this question affirmatively and determine the additive constant.

\begin{theorem}\label[theorem]{thm:main}
As $n\to\infty$, $ F_{\bip}(n)=\frac14\sqrt n+\frac38+o(1)$.
\end{theorem}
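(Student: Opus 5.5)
The plan is to prove matching lower and upper bounds. The lower bound comes from an optimised version of Tuza's construction. The upper bound comes from a double-counting inequality combined with a structural reduction to "core plus pendant leaves" graphs, where the optimisation can be done explicitly.

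\textbf{Lower bound.} I would take a core $K_{a,b}$ with parts $A,B$ and attach $L_A$ leaves to every vertex of $A$ and $L_B$ leaves to every vertex of $B$, allowing $a\ne b$ and $L_A\ne L_B$, with rounding handled at the end. Write $D_A=b+L_A$ and $D_B=a+L_B$ for the core degrees. A leaf at a vertex of $A$ has local average $D_A$. A core vertex of $A$ has local average
\[
\frac{b D_B+L_A}{D_A}.
\]
Also $n=a+b+aL_A+bL_B$ and $2|E|=2(ab+aL_A+bL_B)$.

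In the symmetric case $a=b=p$, $L_A=L_B=L$ with $pL\approx n/2$, the binding constraint is the core one, $p+L/(p+L)$. The resulting ratio is $\frac{(p+1)n}{2(p^2+n)}$ up to lower-order terms. Its maximiser satisfies $p^2+2p\approx n$, and the value there is $\frac14\sqrt n$. I would then expand the exact ratio to order $O(1)$, keeping the terms $1+L$ and $L/(p+L)$ that the crude computation drops. The free parameters $a,b,L_A,L_B$ would be chosen so that the leaf and core constraints are balanced and the constant term equals $\frac38$. If the symmetric family only gives a smaller constant, I would let $a-b$ and $L_A-L_B$ be $O(1)$ and re-optimise.

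\textbf{Upper bound, first step.} Let $q=q(G)$ and $m=|E(G)|$, with bipartition $X\cup Y$. Summing $\sum_{u\in N(v)}d(u)\ge q\,d(v)$ over $v\in X$ gives $\sum_{u\in Y}d(u)^2\ge qm$, and similarly for $X$. Hence
\[
\sum_v d(v)^2=\sum_{uv\in E}\bigl(d(u)+d(v)\bigr)\ge 2qm .
\]
Suppose $q/\avd>\frac14\sqrt n+\frac38+\varepsilon$. I would split the vertices into "heavy" ones (degree at least $q/2$, say) and "light" ones. Heavy vertices are few, about $O(m/q)$ of them. Every vertex $v$ of degree $d(v)<q$ must have heavy neighbours, and its neighbourhood must contain degree mass at least $q\,d(v)$. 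The crude count comes from bounding $\sum_{uv\in E}(d(u)+d(v))$ edge by edge. Edges between two heavy vertices number at most $|H_X||H_Y|$. Each light–heavy edge contributes at most the heavy degree plus a small light degree. This should already give $q/\avd\le\frac14\sqrt n+O(1)$.

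\textbf{Upper bound, refinement.} To get the constant, I would run a stability/shifting argument. In a near-extremal graph the heavy vertices $H_X\cup H_Y$ induce an almost complete bipartite graph, and light vertices are essentially leaves. Moving edges so that light vertices become pendant, and balancing leaves among heavy vertices, should not decrease $q/\avd$ by more than $o(1)$. After this the problem becomes exactly the parametrised optimisation from the lower bound, with core degrees allowed to vary. A convexity (Jensen) argument in the core degrees should show that near-uniform degrees on each side are optimal. That leaves the same two-variable calculus problem as before, whose maximum is $\frac14\sqrt n+\frac38+o(1)$.

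\textbf{Main obstacle.} I expect the hardest step to be the reduction from an arbitrary graph to the core-plus-leaves form with only $o(1)$ loss. Additive-constant precision leaves no room for the usual $(1+o(1))$ slack. My first attempt would be a weighted inequality: for each vertex $v$ choose weights $\lambda_v$ and sum $\lambda_v\bigl(\sum_{u\in N(v)}d(u)-q\,d(v)\bigr)\ge0$, using different weights on light and heavy vertices. The aim is that the resulting inequality is tight precisely on the extremal family, so that no structural cleanup is needed. Failing that, I would treat the heavy side with an exact counting of the $O(\sqrt n)$ heavy vertices and absorb the light–light edges as an error term of size $o(m/\sqrt n)$.
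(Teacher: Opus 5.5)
\textbf{Lower bound.} Your construction is in the wrong regime, and the fallback you propose cannot repair it. Take a core $K_{a,b}$ with $h$ leaves at each $a$-side vertex and $g$ at each $b$-side vertex, and write $D_A=b+h$, $D_B=a+g$. Then $q\ge Q$ at the core vertices reads $b(D_B-1)\ge(Q-1)D_A$ and $a(D_A-1)\ge(Q-1)D_B$, which forces $D_A,D_B\ge Q$. If $m-n=ab-a-b=Q^2-cQ+O(1)$ with $Q=\sqrt n+O(1)$, then $nQ/(2m)=\frac14\sqrt n+\frac c8+o(1)$. Put $D_A=xQ$, $D_B=yQ$. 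One checks that $n=Q^2+O(Q)$ forces $x+y=3+o(1)$, and that the two constraints give $ab-a-b\ge Q^2-c(x)Q-O(1)$ with
\[
c(x)=1+\frac2x+\frac{x-1}{3-x},\qquad 1\le x\le 2 .
\]
This function is convex, with minimum $c(3/2)=8/3$ at the balanced point. That point is exactly your symmetric family (there $L/(p+L)\to\frac13$), so the family gives only $\frac14\sqrt n+\frac13+o(1)$. The value $c=3$ is reached only at $x\in\{1,2\}$. Near-tightness also forces $a/b\to(y/x)^2$, so taking $a-b=O(1)$ keeps $x\to 3/2$ and can never beat $\frac13$.

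What is needed is $a/b\to4$: roughly $a\approx 2Q$, $b\approx Q/2$, about $Q/2$ leaves at each vertex of the large side and only $o(Q)$ on the small side. The two sides then have degrees $\approx Q$ and $\approx 2Q$; the paper takes $a/b\approx P^2/M^2$ with $P=2M-1$. Moreover, ``rounding at the end'' is not harmless in this regime. Changing $a$ or $b$ by one moves $ab$ by $\Theta(\sqrt n)$, and hence moves the additive constant by $\Theta(1)$. So the offsets must satisfy exact arithmetic conditions, and each configuration only covers a bounded window of $\sqrt n$. This is why the paper needs the linear condition on the offsets, rational targets $Q=Lk+j/(2MP)$, and the circle-covering claim.

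\textbf{Upper bound.} Here the decisive step is missing. The quantity $m-n$ is of order $n$ but must be controlled to within $o(\sqrt n)$. Your stability/shifting reduction (``heavy vertices span an almost complete bipartite graph, light vertices are essentially leaves'', then Jensen) would therefore have to lose far less than such arguments ordinarily do, and nothing in the proposal achieves this. Even if the reduction were granted, the resulting two-sided problem is optimised at the boundary $x\in\{1,2\}$ above, not at the balanced point you analysed.

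The paper needs no structure at all. On the non-leaf core it sets $C_{xy}=1/\sqrt{d(x)d(y)}$ for $xy\in E$ and $B=P_XCP_Y$ with $P=\mathrm{diag}(\sqrt{d-1})$. It then shows:
\begin{itemize}
\item $\sigma_1(C)\le 1$;
\item $\sigma_1(B)\ge q-1$, using the test vector $\sqrt{d(d-1)}$;
\item $\sigma_i(B)\ge\sigma_i(C)$ for every $i$;
\item $\|B\|_{\mathrm F}^2-\|C\|_{\mathrm F}^2=m-n+Z_\ell$, where $Z_\ell$ is the sum of $1/(d(x)d(y))$ over the leaf edges.
\end{itemize}
Hence $m-n+Z_\ell\ge q(q-2)$. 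Together with $Z_\ell\le n/q$ this gives $m\ge n+q^2-2q-n/q$, and a one-variable optimisation yields exactly $\frac14\sqrt n+\frac38+o(1)$. Your ``weighted inequality tight on the extremal family'' is the right kind of object. With the weights unspecified, however, it is precisely the missing idea, and the paper's version rests on a global spectral comparison rather than a sum of the local constraints.
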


The proof has two distinct components. For the upper bound, we restrict the normalized biadjacency matrix to the non-leaf core while retaining the degrees from the original graph. We then introduce a second weighted matrix $B$. The inequalities $\navd_G(v)\ge q(G)$ for all $v\in V(G)$ force the largest singular value of $B$ to be at least $q(G)-1$, while every singular value of $B$ is at least the corresponding singular value of $C$. This yields a sharp inequality involving $|E(G)|$, $|V(G)|$, $q(G)$, and the reciprocal mass of the leaf edges.

For the lower bound, a single fixed choice of the core parameters realizes only a restricted interval of graph orders and therefore does not suffice for all sufficiently large $n$. To overcome this obstacle, we consider complete bipartite cores with leaves attached to both sides and introduce finitely many rational phases in the choice of the parameters. A circle-covering argument then shows that the corresponding intervals jointly cover every sufficiently large integer order, while preserving the same additive term.

The paper is organized as follows. Section~\ref{sec:upper}  proves the upper bound via a singular-value argument. Section~\ref{sec:lower} gives the lower-bound construction.
\section{The upper bound via a spectral method}\label{sec:upper}
Throughout this section, let $G=(X,Y;E)$ be a bipartite graph with no isolated vertices, and write $n=|V(G)|,~ m=|E(G)|,~ q=q(G)$.
Let
\[
 X_2=\{x\in X:d(x)\ge2\},
 \qquad
 Y_2=\{y\in Y:d(y)\ge2\}.
\]
We refer to $G[X_2,Y_2]$ as the \emph{non-leaf core}.  Define matrices $C$ and $B$, with rows indexed by $X_2$ and columns indexed by $Y_2$, by
\[
 C_{xy}=\frac{\one_{xy\in E}}{\sqrt{d(x)d(y)}},
 \qquad
 B_{xy}=\one_{xy\in E}
 \sqrt{\frac{(d(x)-1)(d(y)-1)}{d(x)d(y)}}.
\]
Thus $B=P_XCP_Y$, where $P_X=\diag\bigl(\sqrt{d(x)-1}:x\in X_2\bigr)$ and $P_Y=\diag\bigl(\sqrt{d(y)-1}:y\in Y_2\bigr)$.

For a real matrix $A$, write $\sigma_1(A)\ge\sigma_2(A)\ge\cdots\ge0$ for its singular values in non-increasing order, and $\sigma_{\min}(A)$ for its minimum singular value.
For a matrix with zero rows or zero columns, we adopt the convention $\sigma_1(A)=0$.
We first compare the largest singular values of $B$ and $C$.

\begin{proposition}\label{prop:basic-spectral}
    The matrices $C$ and $B$ satisfy $\sigma_1(C)\le1$ and $\sigma_1(B)\ge q-1$.
\end{proposition}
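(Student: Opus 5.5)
The plan is to prove the two inequalities separately: a Cauchy--Schwarz estimate for $\sigma_1(C)$, and an explicit pair of test vectors in the variational characterization $\sigma_1(A)=\max_{u,v\ne 0} u^{\top}Av/(\|u\|\,\|v\|)$ for $\sigma_1(B)$.

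\emph{Step 1: $\sigma_1(C)\le 1$.} If $X_2$ or $Y_2$ is empty, the convention gives $\sigma_1(C)=0$. Otherwise take arbitrary $u\in\mathbb R^{X_2}$ and $v\in\mathbb R^{Y_2}$, and let the sums below run over core edges $xy$ with $x\in X_2$, $y\in Y_2$. Cauchy--Schwarz gives
\[
\Bigl(\sum_{xy}\frac{|u_x||v_y|}{\sqrt{d(x)d(y)}}\Bigr)^2\le\Bigl(\sum_{xy}\frac{u_x^2}{d(x)}\Bigr)\Bigl(\sum_{xy}\frac{v_y^2}{d(y)}\Bigr).
\]
Each $x\in X_2$ lies in at most $d(x)$ core edges, so the first factor is at most $\|u\|^2$. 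Likewise the second factor is at most $\|v\|^2$. Hence $u^{\top}Cv\le\|u\|\|v\|$, which gives $\sigma_1(C)\le 1$. Equivalently, $C$ is a submatrix of the full normalized biadjacency matrix, whose top singular value is $1$.

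\emph{Step 2: $\sigma_1(B)\ge q-1$.} If $q\le 1$ there is nothing to prove, so assume $q>1$. For $x\in X_2$, the hypothesis $\navd_G(x)\ge q$ reads $\sum_{y\in N(x)}d(y)\ge q\,d(x)$. Subtracting $d(x)=\sum_{y\in N(x)}1$ from both sides gives
\[
\sum_{y\in N(x)\cap Y_2}(d(y)-1)\ge (q-1)\,d(x).
\]
Here leaf neighbours contribute $d(y)-1=0$, which is why the sum may be restricted to $Y_2$. The symmetric inequality holds for every $y\in Y_2$.

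This also settles nonemptiness. Since $q>1$, the inequality shows that every $x\in X_2$ has a neighbour in $Y_2$. Moreover, the neighbour of any leaf has degree at least $q>1$, so it lies in $X_2$ or $Y_2$. Consequently the core contains at least one edge, since $G$ has at least one edge.

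Now choose the test vectors $u_x=\sqrt{d(x)(d(x)-1)}$ for $x\in X_2$ and $v_y=\sqrt{d(y)(d(y)-1)}$ for $y\in Y_2$. Then $B_{xy}u_xv_y=(d(x)-1)(d(y)-1)$ on core edges. Hence
\[
u^{\top}Bv=S:=\sum_{xy}(d(x)-1)(d(y)-1).
\]
Multiply the displayed inequality for $x$ by $d(x)-1$ and sum over $X_2$. This yields $S\ge (q-1)\sum_{x}d(x)(d(x)-1)=(q-1)\|u\|^2$. Symmetrically, $S\ge (q-1)\|v\|^2$. Therefore
\[
u^{\top}Bv\ge (q-1)\|u\|\|v\|,
\]
and so $\sigma_1(B)\ge q-1$.

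The main obstacle is the choice of test vectors in Step 2. The weights $\sqrt{d(d-1)}$ are chosen so that the quadratic form collapses to a sum that the local-average condition controls directly, after the condition has been shifted by $1$ to discard the leaves. The only other care needed is the degenerate case of an empty core, which the case split on $q$ handles.
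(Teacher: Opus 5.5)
Your proof is correct and follows essentially the paper's route. You bound $\sigma_1(C)\le 1$ by an edge-wise Cauchy--Schwarz inequality where the paper uses AM--GM on the full matrix $\widehat C$. For $\sigma_1(B)\ge q-1$ you use the same test vectors $\sqrt{d(d-1)}$ and the same local-average inequality shifted by $1$; the remaining differences, namely working with the bilinear form $u^{\mathsf T}Bv$ rather than the Rayleigh quotient of the dilation $\mathcal A$, and splitting on $q\le 1$ rather than on whether $X_2$ or $Y_2$ is empty, are only cosmetic.
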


\begin{proof}
If $X_2=\varnothing$ or $Y_2=\varnothing$, then every vertex on one side has degree $1$, and hence $q=1$. Thus the conclusion is immediate. We may therefore assume that $X_2$ and $Y_2$ are both non-empty.
Let $\widehat C$ be the normalized biadjacency matrix of the whole graph.  For real vectors $r=(r_x)_{x\in X}$ and $s=(s_y)_{y\in Y}$,
\[
2r^{\mathsf T}\widehat C s
\le
\sum_{xy\in E}
\left(
\frac{r_x^2}{d(x)}+\frac{s_y^2}{d(y)}
\right)
=
\sum_{x\in X}r_x^2+\sum_{y\in Y}s_y^2.
\]
Applying the same inequality with $-r$ in place of $r$, we obtain
\[
2\lvert r^{\mathsf T}\widehat C s\rvert
\le
\sum_{x\in X}r_x^2+\sum_{y\in Y}s_y^2.
\]
Taking the supremum over unit vectors therefore gives $\sigma_1(\widehat C)\le 1$.
Since $C$ is a submatrix of $\widehat C$, it follows that $\sigma_1(C)\le1$.

Define
$u\in\mathbb R^{X_2}$ and $v\in\mathbb R^{Y_2}$ by $u_x=\sqrt{d(x)(d(x)-1)}$ and $v_y=\sqrt{d(y)(d(y)-1)}$.
For $x\in X_2$,
\begin{align*}
    (Bv)_x
 &=\sqrt{\frac{d(x)-1}{d(x)}}
   \sum_{\substack{y\in N(x)\\d(y)\ge2}}(d(y)-1) =\sqrt{\frac{d(x)-1}{d(x)}}
   \sum_{y\in N(x)}(d(y)-1)\\
 &\ge(q-1)\sqrt{d(x)(d(x)-1)}
 =(q-1)u_x.
\end{align*}
Similarly, $B^{\mathsf T}u\ge(q-1)v$.  For $\mathcal{A}= \begin{pmatrix}0&B\\B^{\mathsf T}&0\end{pmatrix}$ and $w = \binom{u}{v}$, we have $\mathcal{A}w\ge (q-1)w$ entrywise.
Since $w$ is non-zero and non-negative, the Rayleigh quotient gives $\lambda_{max}(\mathcal{A})\ge \frac{w^T\mathcal{A}w}{w^Tw}\ge q-1$. The eigenvalues of $\mathcal{A}$ are $\pm\sigma_i(B)$, together with possible zero eigenvalues. Hence $\sigma_1(B)\ge q-1$.
\end{proof}

Let $Z_\ell = \sum_{\substack{xy\in E\\\min\{d(x),d(y)\}=1}} \frac{1}{d(x)d(y)}$ be the reciprocal mass of the leaf edges.

\begin{proposition}\label{prop:spectral-leaf}
$m-n+Z_\ell\ge q(q-2)$.
\end{proposition}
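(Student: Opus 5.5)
The plan is to compare the squared Frobenius norms of $B$ and $C$. Their difference should turn out to be exactly $m-n+Z_\ell$. On the other hand, it is bounded below by $\sigma_1(B)^2-\sigma_1(C)^2$, and Proposition~\ref{prop:basic-spectral} bounds that from below by $(q-1)^2-1=q(q-2)$.

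\textbf{Step 1: the norm identity.} Call an edge $xy$ a core edge if $x\in X_2$ and $y\in Y_2$, and a leaf edge otherwise; let $m_c$ and $m_\ell$ be their numbers. Expanding the entries of $B$,
\[
\|B\|_\Frob^2=\sum_{\text{core }xy}\Bigl(1-\tfrac1{d(x)}-\tfrac1{d(y)}+\tfrac1{d(x)d(y)}\Bigr)
= m_c-\sum_{\text{core }xy}\Bigl(\tfrac1{d(x)}+\tfrac1{d(y)}\Bigr)+\|C\|_\Frob^2 .
\]
Summing over all edges gives $\sum_{xy\in E}\bigl(\tfrac1{d(x)}+\tfrac1{d(y)}\bigr)=n$, since there are no isolated vertices. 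So the core sum equals $n$ minus the corresponding leaf-edge sum.

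For a leaf edge with, say, $d(x)=1$, we have $\tfrac1{d(x)}+\tfrac1{d(y)}=1+\tfrac1{d(x)d(y)}$. This also holds when both endpoints have degree $1$, since then both sides equal $2$. Hence the leaf-edge sum is $m_\ell+Z_\ell$, and therefore
\[
\|B\|_\Frob^2-\|C\|_\Frob^2=m_c-n+m_\ell+Z_\ell+0=m-n+Z_\ell .
\]
If the core has no rows or no columns, both norms vanish and $q=1$. The identity then gives $m-n+Z_\ell=0\ge -1=q(q-2)$, so we may assume $X_2,Y_2\neq\varnothing$.

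\textbf{Step 2: singular value domination.} This is the step flagged in the introduction, and the only one needing care. We have $B=P_XCP_Y$, and since every index lies in $X_2$ or $Y_2$, all diagonal entries of $P_X$ and $P_Y$ are $\sqrt{d-1}\ge1$. So $\sigma_{\min}(P_X),\sigma_{\min}(P_Y)\ge1$.

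The standard multiplicative inequality $\sigma_i(PAQ)\ge\sigma_{\min}(P)\,\sigma_i(A)\,\sigma_{\min}(Q)$ then gives $\sigma_i(B)\ge\sigma_i(C)$ for every $i$. This inequality holds for square invertible $P,Q$ and follows from Courant--Fischer: restricting $\|PAQz\|$ to $Q^{-1}$ of an $i$-dimensional subspace loses at most the factors $\sigma_{\min}$.

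\textbf{Step 3: conclusion.} Writing the Frobenius norms as sums of squared singular values,
\[
m-n+Z_\ell=\sum_i\bigl(\sigma_i(B)^2-\sigma_i(C)^2\bigr)\ge\sigma_1(B)^2-\sigma_1(C)^2 ,
\]
because all terms with $i\ge2$ are nonnegative by Step~2. Proposition~\ref{prop:basic-spectral} gives $\sigma_1(B)\ge q-1\ge0$, using $q\ge1$, and $\sigma_1(C)\le1$. Hence the right-hand side is at least $(q-1)^2-1=q(q-2)$, as required.
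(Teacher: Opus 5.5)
Your proof is correct and follows the paper's argument essentially step for step. The three ingredients match: the identity $\|B\|_{\Frob}^2-\|C\|_{\Frob}^2=m-n+Z_\ell$, the domination $\sigma_i(B)\ge\sigma_i(C)$ obtained from $B=P_XCP_Y$ with $\sigma_{\min}(P_X),\sigma_{\min}(P_Y)\ge1$, and the final appeal to Proposition~\ref{prop:basic-spectral}. The differences are cosmetic: you split the edge sum into core and leaf edges, whereas the paper sums $\bigl(1-\frac1{d(x)}\bigr)\bigl(1-\frac1{d(y)}\bigr)$ over all edges and notes that the leaf terms vanish, and you handle the empty-core case explicitly rather than through the convention $\sigma_1=0$.
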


\begin{proof}
Since $B=P_XCP_Y$, $\sigma_{\min}(P_X)\ge1$, and $\sigma_{\min}(P_Y)\ge1$, the min–max characterization of singular values gives 
\[
\sigma_i(B)
\ge
\sigma_{\min}(P_X)\sigma_i(C)\sigma_{\min}(P_Y)
\ge
\sigma_i(C)
\qquad\text{for }1\le i \le \min\{|X_2|,|Y_2|\}.
\]
Moreover,
\[
\begin{aligned}
\|B\|_{\Frob}^2
&=\sum_{xy\in E}
\left(1-\frac1{d(x)}\right)
\left(1-\frac1{d(y)}\right) =m-n+\sum_{xy\in E}\frac1{d(x)d(y)},
\end{aligned}
\]
where including the leaf edges does not change the sum, since their contributions vanish.
Consequently, by $\|C\|_{\Frob}^2
=\sum_{\substack{xy\in E\\ d(x),d(y)\ge2}}
\frac1{d(x)d(y)}$ and Proposition \ref{prop:basic-spectral}, 
\[
m-n+Z_\ell
=\|B\|_{\Frob}^2-\|C\|_{\Frob}^2
=\sum_i\bigl(\sigma_i(B)^2-\sigma_i(C)^2\bigr)
\ge \sigma_1(B)^2-\sigma_1(C)^2\ge q(q-2). \qedhere
\]
\end{proof}

\begin{proof}[Proof of the upper bound in Theorem~\ref{thm:main}]
If $q<2$, then $q/\avd(G)<2$, because $\avd(G)\ge1$. For all sufficiently large $n$, this is already smaller than the desired upper bound. We may therefore assume $q\ge2$. The non-leaf endpoint of every leaf edge has degree at least $q$, and hence $Z_\ell\le n/q$.  By Proposition~\ref{prop:spectral-leaf}, $m\ge n+q^2-2q-\frac nq$. 
Consequently,
\[
 \frac{q}{\avd(G)}=\frac{nq}{2m}
 \le
 \frac{nq}{2\left(n+q^2-2q-n/q\right)}.
\]
Put $N=\sqrt n$ and define
\[
f_N(q):=
\frac{N^2q}
{2\left(N^2+q^2-2q-N^2/q\right)},
\qquad
h_N(q):=q^3-N^2q+2N^2.
\]
A direct differentiation gives $\operatorname{sgn}f_N'(q)=-\operatorname{sgn}h_N(q)$.
For all sufficiently large $N$, the three roots of $h_N$ lie in $(-N-1,-N),~(2,3),~(N-2,N)$.
Indeed, 
\begin{align*}
    h_N(-N-1)<0<h_N(-N),\qquad h_N(2)>0>h_N(3),\qquad h_N(N-2)<0<h_N(N).
\end{align*}
Thus the only possible global maximizers on $[2, \infty)$ are $2$ and the largest root $q_N$. Since $q_N/N \to 1$, we have $f_N(q_N) \sim N/4 > 2 = f_N(2)$ for all sufficiently large $N$. Hence the maximum is attained at $q_N$.

Write $q_N=N+s_N$. Then $2(s_N+1)+\frac{3s_N^2}{N}+\frac{s_N^3}{N^2}=0$, and hence $s_N\to-1$.  Moreover,
\[
\begin{aligned}
f_N(q_N)-\frac N4
&=
\frac{
2q_N/N-s_N^2/N+N/q_N
}{
4\left(
1+q_N^2/N^2-2q_N/N^2-1/q_N
\right)
} \longrightarrow \frac38.
\end{aligned}
\]
Therefore $\limsup\limits_{n\to\infty} \left( F_{\bip}(n)-\frac14\sqrt n \right) \le \frac38$.
\end{proof}
\section{The lower-bound construction}\label{sec:lower}
We prove the lower bound using a family of complete bipartite cores with leaves attached to both sides of the core. Since deleting one leaf decreases both the number of vertices and the number of edges by one, the quantity $m-n=ab-a-b$ depends only on the core parameters. The construction is arranged around a target value $Q$ for the local average.  Our construction will ensure that the final order satisfies $n=Q^2+O_M(Q)$, while the leaf-independent quantity $m-n=ab-a-b$ satisfies
\[
 m-n=Q^2-\left(3-\frac1P\right)Q+O_M(1),
\]
with $P$ sufficiently large.  This first-order information is precisely what yields the additive term $3/8$. The remaining difficulty is that one fixed core realizes only an interval of orders.  We overcome this by using finitely many arithmetic phases whose intervals cover all sufficiently large orders.

\begin{proof}[Proof of the lower bound in Theorem~\ref{thm:main}]
Fix $\varepsilon>0$.  We shall construct, for all sufficiently large $n$, a bipartite graph $G$ on $n$ vertices such that $\frac{q(G)}{\avd(G)} \ge \frac14\sqrt n+\frac38-\varepsilon.$

Choose $M\ge2$ such that $\frac1{8(2M-1)}<\frac\varepsilon2$, and put $P=2M-1,~ L=MP(3M-1)$.
We use $2MP$ phases.  For a phase $j\in\{0,1,\ldots,2MP-1\}$ and a large integer $k$, set
\[
 s=\frac{j}{2MP},
 \qquad
 Q=Lk+s.
\]
We take a complete bipartite core $K_{a,b}$ and attach $h$ private leaves to every vertex on the $a$-side and $g$ private leaves to every vertex on the $b$-side, where
\[
\begin{aligned}
 a&=P^2(3M-1)k+A,
 & b&=M^2(3M-1)k+B,\\
 h&=M^2(3M-2)k+H,
 & g&=P^2k+G.
\end{aligned}
\]
The integer offsets $A,B,H,G$ depend on the phase $j$ and on an auxiliary integer $t$, introduced below, but not on $k$.  We first choose these offsets so that the coefficient of $k$ in $ab-a-b$ has the required value, and therefore impose
\begin{equation}\label{eq:linear-condition}
 M^2A+P^2B=j-M^2+1.
\end{equation}
Under this condition, define $r=MA+P(3M-1)(B+H)-M(3M-1)G-3MPs+P$.
The following arithmetic claim provides the required choice of offsets.

\begin{claim}\label{clm:offsets}
For each phase $j$ and each integer $t$, there exist integers
$A,B,H,G$ such that
\[
 M^2A+P^2B=j-M^2+1,
 \qquad
 1\le r<2,
 \qquad
 A+B+G+H=t-2.
\]
\end{claim}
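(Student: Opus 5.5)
The plan is to eliminate the two linear constraints one at a time and then solve what remains as a linear Diophantine equation in two free integer variables. The only arithmetic input is a handful of coprimality facts coming from $P=2M-1$:
\[
\gcd(M,P)=1,\qquad \gcd(P,3M-1)=\gcd(M,2M-1)=1,\qquad \gcd(M,3M-1)=1.
\]
I also use $P+M=3M-1$ and $3MPs=3MP\cdot\frac{j}{2MP}=\frac{3j}{2}$.

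First I handle the condition $M^2A+P^2B=j-M^2+1$. Since $\gcd(M^2,P^2)=1$, it has an integer solution $(A_0,B_0)$. Its general solution is
\[
A=A_0+P^2u,\qquad B=B_0-M^2u,\qquad u\in\mathbb Z.
\]
Next I use the sum condition to eliminate $H$, setting $H=t-2-A-B-G$. Substituting into the definition of $r$ and using $P+M=3M-1$ for the coefficient of $G$ gives
\[
r=\bigl(M-P(3M-1)\bigr)A+P(3M-1)(t-2)-(3M-1)^2G-\tfrac{3j}{2}+P .
\]
Since $A,G$ are integers, $r\equiv -\tfrac{3j}{2}\pmod 1$. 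So I aim for the specific target $r=1$ when $j$ is even and $r=\tfrac32$ when $j$ is odd; in both cases $1\le r<2$.

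For either target, the requirement becomes one integer equation. Plugging in $A=A_0+P^2u$, it reads
\[
P^2\bigl(M-P(3M-1)\bigr)u-(3M-1)^2G=T,
\]
where $T$ is an explicit integer depending only on $j,t,M,A_0$. This is solvable in integers $u,G$ for every $T$ exactly when the two coefficients are coprime. The coprimality facts above give this: $\gcd(P,3M-1)=1$, and $\gcd\bigl(M-P(3M-1),\,3M-1\bigr)=\gcd(M,3M-1)=1$. Hence $\gcd\bigl(P^2(M-P(3M-1)),(3M-1)^2\bigr)=1$.

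Given such $u$ and $G$, I then recover $A$, $B$ and $H$ from the formulas above. By construction all three constraints of \Cref{clm:offsets} hold.

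The main point to verify carefully is the substitution for $H$. The coefficient of $G$ should collapse to $-(3M-1)^2$ via $P+M=3M-1$, and the coefficient of $A$ should become $M-P(3M-1)$, which is coprime to $3M-1$. The half-integer part of $r$ is forced by $j$ and poses no obstacle, since the window $[1,2)$ contains both an integer and a half-integer.
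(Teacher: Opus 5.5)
Your proof is correct, but it takes a different route from the paper. The paper works with moves. It first fixes one solution of $M^2A+P^2B=j-M^2+1$. It then shows that the attainable values of $r$ form the coset $r_0+\mathbb Z$, using the shift $(A,B)\mapsto(A+P^2,B-M^2)$, which changes $r$ by $-MP(3M^2-3M+1)$, together with changes of $(H,G)$, which change $r$ by multiples of $3M-1$. With $r$ fixed, it then adjusts $A+B+G+H$ through two further moves that preserve both the linear condition and $r$. These shift the sum by $3M-1$ and by $3M^3-6M^2+2M$ respectively. That route needs two separate coprimality checks, plus finding the non-obvious second invariant move $\bigl(A+P^2(3M-1),\,B-M^2(3M-1),\,H,\,G-P(3M^2-3M+1)\bigr)$.

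You instead use the sum constraint to eliminate $H$, and $B$ then drops out of $r$ entirely. After parametrizing $(A,B)$ by $u$, all three constraints collapse into the single B\'ezout equation $P^2\bigl(M-P(3M-1)\bigr)u-(3M-1)^2G=T$. This needs only $\gcd\bigl(P^2(M-P(3M-1)),(3M-1)^2\bigr)=1$, which follows from $\gcd(P,3M-1)=\gcd(M,3M-1)=1$. Your choice of target ($1$ or $\tfrac32$ according to the parity of $j$) is exactly what makes $T$ an integer, and it builds in from the start what the paper only observes afterwards.

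Your argument is shorter and more transparent. The paper's move-based version shows more of the structure of the solution set, but the rest of the paper only uses the existence of one choice of offsets per pair $(j,t)$.
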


\begin{proof}
First choose any integers $A,B$ satisfying
$M^2A+P^2B=j-M^2+1$, which is possible since $\gcd(M^2,P^2)=1$.  We now show that the remaining freedom in the offsets allows us to arrange $1\le r<2$.
Replacing $(A,B)$ by $(A+P^2,B-M^2)$ preserves \eqref{eq:linear-condition} and changes $r$ by $-MP(3M^2-3M+1)$.
On the other hand, since $\gcd(P,M)=1$,  $PH-MG$ ranges over all integers as $H$ and $G$ vary. Thus changing only $H$ and $G$ changes $r$ by an arbitrary multiple of $3M-1$.

The two step sizes $MP(3M^2-3M+1)$ and $3M-1$ are coprime. Indeed, $\gcd(M,3M-1)=1$, and $\gcd(P,3M-1)=1$. Moreover, $3M^2-3M+1+P=M(3M-1)$, and hence $3M^2-3M+1\equiv -P \pmod{3M-1}$.
Thus \(3M^2-3M+1\) is also coprime to \(3M-1\). Therefore $\gcd\bigl(MP(3M^2-3M+1),3M-1\bigr)=1$.
More explicitly, if $r_0$ is one attainable value, then the full set of attainable values is 
\begin{align*}
    r_0+MP(3M^2-3M+1)\mathbb Z+(3M-1)\mathbb Z = r_0+\mathbb Z.
\end{align*}
Thus one may choose the offsets so that $1\le r<2$.
Moreover, all terms in the definition of $r$ except $-3MPs=-3j/2$ are integers. Thus this choice necessarily gives $r=1$ when $j$ is even and $r=3/2$ when $j$ is odd.

It remains to adjust $A+B+G+H$ without changing either \eqref{eq:linear-condition} or $r$.  The move
\[
(A,B,H,G)\mapsto (A,B,H+M,G+P)
\]
preserves both quantities and changes $A+B+G+H$ by $3M-1$.  Also the move
\[
(A,B,H,G)\mapsto
\bigl(A+P^2(3M-1),\,B-M^2(3M-1),\,H,\,
G-P(3M^2-3M+1)\bigr)
\]
preserves both quantities and changes $A+B+G+H$ by $3M^3-6M^2+2M$.
Therefore, after \eqref{eq:linear-condition} and $r$ have been fixed, the sum $A+B+G+H$ can still be changed by any integer combination of $3M-1$ and $3M^3-6M^2+2M$.
Since $9(3M^3-6M^2+2M)\equiv 1 \pmod {3M-1}$, we have $\gcd(3M-1,~3M^3-6M^2+2M)=1$.  Hence these integer combinations give all integers, and we may choose suitable integer combinations of the two moves so that $A+B+G+H=t-2$.
\end{proof}

For each pair $(j,t)$, fix one choice of offsets supplied by Claim \ref{clm:offsets}. At this stage the constants below may depend on $t$ as well as on $M$. Expanding in powers of $k$ gives 
\begin{align*}
    ab-a-b&=L^2k^2+(3M-1)(M^2A+P^2B-P^2-M^2)k +O_{M,t}(1)\\
    &=Q^2-(3-\frac{1}{P})Q+O_{M,t}(1),
\end{align*}
where the second equality uses \eqref{eq:linear-condition}.
For the same core, define
\[
 S_X:=ah-(Q-1)g+a(b-Q),
 \qquad
 S_Y:=bg-(Q-1)h+b(a-Q).
\]
These are the two deletion capacities needed to keep the local averages at the core vertices at least $Q$. Collecting the coefficients of $k$ and then using \eqref{eq:linear-condition}, we obtain
\begin{equation}\label{eq:capacities}
 S_X=Prk+O_{M,t}(1),
 \qquad
 S_Y=M(2-r)k+O_{M,t}(1).
\end{equation}
Since $r$ is in fact either $1$ or $3/2$, both leading coefficients are bounded below by a positive constant depending only on $M$.

We next record the deletion rule, now in the form in which it will be applied to the above core.  Let $G_0$ be the graph before any leaves are deleted.

\begin{claim}\label{clm:deletion-rule}
Assume $S_X,S_Y\ge0$.  If an integer $D$ satisfies
\[
 \lfloor S_X\rfloor\le aD,
 \qquad
 \lfloor S_Y\rfloor\le bD,
 \qquad
 D\le \min\{h,g,b+h-Q,a+g-Q\},
\]
then, for every integer $N\in\bigl[a+b+ah+bg-\lfloor S_X\rfloor-\lfloor S_Y\rfloor,\,a+b+ah+bg\bigr]$,  there exists a graph $G$, obtained from $G_0$ by deleting at most $D$ leaves at each core vertex, such that $|V(G)|=N$ and $q(G)\ge Q$.
\end{claim}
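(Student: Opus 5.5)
The plan is to write down the local averages of the graph obtained by deleting leaves explicitly. This reduces $q(G)\ge Q$ to two linear constraints on the total numbers of deleted leaves on each side. These constraints are exactly $E_X\le S_X$ and $E_Y\le S_Y$, which is why $S_X,S_Y$ were called deletion capacities.

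\textbf{Setup.} Write $X$ for the $a$-side and $Y$ for the $b$-side of the core. Suppose we delete $d_x\in[0,h]$ leaves at each $x\in X$ and $e_y\in[0,g]$ leaves at each $y\in Y$, and put $E_X=\sum_x d_x$ and $E_Y=\sum_y e_y$. Then $d(x)=b+h-d_x$ and $d(y)=a+g-e_y$. Every remaining leaf still has its core neighbour, and every core vertex has degree at least $\min\{a,b\}\ge1$, so there are no isolated vertices. The order is $a+b+ah+bg-E_X-E_Y$.

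\textbf{Leaves.} A leaf hanging at $x$ has local average $d(x)=b+h-d_x$. This is at least $Q$ as soon as $d_x\le D\le b+h-Q$. Symmetrically, leaves at $y$ are fine when $e_y\le D\le a+g-Q$.

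\textbf{Core vertices.} For $x\in X$,
\[
d(x)\,\navd(x)=\sum_{y\in Y}(a+g-e_y)+(h-d_x)=b(a+g)-E_Y+h-d_x .
\]
Hence $\navd(x)\ge Q$ is equivalent to
\[
E_Y\le b(a+g)+h-Q(b+h)+(Q-1)d_x=S_Y+(Q-1)d_x .
\]
Since $Q\ge1$ for large $k$ and $d_x\ge0$, the condition $E_Y\le S_Y$ suffices. Symmetrically, $E_X\le S_X$ guarantees $\navd(y)\ge Q$ for every $y\in Y$.

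\textbf{Realizing each order.} Fix an integer $N$ in the stated interval and set $T=a+b+ah+bg-N$, so that $0\le T\le\lfloor S_X\rfloor+\lfloor S_Y\rfloor$. Take
\[
E_X=\min\{T,\lfloor S_X\rfloor\},\qquad E_Y=T-E_X .
\]
Then $E_X\le\lfloor S_X\rfloor$ and $E_Y\le\lfloor S_Y\rfloor$, both are nonnegative integers, and $S_X,S_Y\ge0$ is used here.

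Distribute $E_X$ as evenly as possible over the $a$ vertices of $X$. Each $d_x$ is then at most $\lceil E_X/a\rceil\le D$, because $E_X\le\lfloor S_X\rfloor\le aD$ and $D$ is an integer. Similarly, each $e_y\le D$ because $E_Y\le\lfloor S_Y\rfloor\le bD$. Finally, $D\le\min\{h,g\}$ makes the deletions feasible, and the remaining bounds on $D$ handle the leaves.

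\textbf{Main obstacle.} The only real content is the algebraic identity that turns $\navd(x)\ge Q$ into $E_Y\le S_Y+(Q-1)d_x$. One then has to notice that the slack term $(Q-1)d_x$ can simply be dropped. After that, choosing the counts and spreading them evenly is routine.
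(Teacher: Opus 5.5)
Your proof is correct and follows the same route as the paper. You split the total deletion into side totals bounded by $\lfloor S_X\rfloor$ and $\lfloor S_Y\rfloor$, spread them with at most $D$ per core vertex, and check $\navd(x)\ge Q$ via the same rearrangement; the paper's step $(Q-1)\lambda_x\le (Q-1)h$, with $\lambda_x=h-d_x$, is exactly your dropping of the slack $(Q-1)d_x$. Leaves are handled in both proofs by $D\le\min\{b+h-Q,a+g-Q\}$, and your explicit choice $E_X=\min\{T,\lfloor S_X\rfloor\}$, the even distribution, and the note that $Q\ge1$ is used only make explicit what the paper leaves implicit.
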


\begin{proof}
Put $d=a+b+ah+bg-N$.  Then
$0\le d\le \lfloor S_X\rfloor+\lfloor S_Y\rfloor$.  Choose $d_X,d_Y$ such that
\[
 d=d_X+d_Y,
 \qquad
 0\le d_X\le \lfloor S_X\rfloor,
 \qquad
 0\le d_Y\le \lfloor S_Y\rfloor.
\]
Delete $d_X$ leaves adjacent to the $a$-side core vertices and $d_Y$ leaves adjacent to the $b$-side core vertices.  Since $\lfloor S_X\rfloor\le aD$, $\lfloor S_Y\rfloor\le bD$, and $D\le\min\{h,g\}$, these deletions can be distributed so that at most $D$ leaves are deleted at each core vertex.  The resulting graph has exactly $N$ vertices.

We check the local averages.  Let $x$ be an $a$-side core vertex, and let $\lambda_x$ be the number of leaves still attached to $x$.  The total number of remaining leaves attached to the $b$-side core vertices is $bg-d_Y$, so
\[
 \navd_G(x)=\frac{ab+bg-d_Y+\lambda_x}{b+\lambda_x}.
\]
Since $d_Y\le S_Y$ and $\lambda_x\le h$, we have $bg-d_Y+b(a-Q)\ge (Q-1)h\ge (Q-1)\lambda_x$, and therefore $\navd_G(x)\ge Q$.
Similarly, if $y$ is a $b$-side core vertex and $\lambda_y$ leaves remain attached to it, then
\[
 \navd_G(y)=\frac{ab+ah-d_X+\lambda_y}{a+\lambda_y}\ge Q.
\]
Finally, every remaining leaf is adjacent to a core vertex of degree at least $b+h-D$ or $a+g-D$, both of which are at least $Q$.  Thus $q(G)\ge Q$.
\end{proof}

Now fix a phase and an integer $t$, and choose offsets by Claim~\ref{clm:offsets}.  Let
\[
 \alpha_M=P(3M^2-4M+2)-(3M-2).
\]
Collecting the quadratic and linear terms in $k$, using \eqref{eq:linear-condition} and $A+B+G+H=t-2$, gives the right endpoint of the deletion interval as
\[
 N_{\max}:=a+b+ah+bg
 =Q^2+\bigl(\alpha_M+(M-1)r+Lt-3Ls\bigr)k+O_{M,t}(1).
\]
Using \eqref{eq:capacities} and absorbing the two floor errors into $O_{M,t}(1)$, the left endpoint is
\begin{align*}
 N_{\min}:=N_{\max}-\lfloor S_X\rfloor-\lfloor S_Y\rfloor =Q^2+\bigl(\alpha_M+Lt-3Ls-2M\bigr)k+O_{M,t}(1),
\end{align*}
where $Pr+M(2-r)=(M-1)r+2M$.  For fixed $M$ and $t$,
\[
 \sqrt{Q^2+\beta k+O_{M,t}(1)}
 =Q+\frac{\beta}{2L}+O_{M,t}(k^{-1}).
\]
Applying this expansion to $N_{\min}$ and $N_{\max}$, and then using Claim~\ref{clm:deletion-rule},  shows that, once the bounded-deletion condition in that claim is verified, the construction realizes every integer $N$ for which $\sqrt N$ lies in the following interval after each endpoint is shifted inward by $O_{M,t}(k^{-1})$:
\begin{equation}\label{eq:root-interval}
 Q+
 \left[
 \frac{\alpha_M+Lt-3Ls-2M}{2L},
 \frac{\alpha_M+(M-1)r+Lt-3Ls}{2L}
 \right].
\end{equation}
Increasing $t$ by one translates this interval by $1/2$.  Hence it remains to check the following covering statement.

\begin{claim}\label{clm:circle-covering}
As the phase $j$ ranges over $0,1,\ldots,2MP-1$, the intervals in \eqref{eq:root-interval} cover the circle $\mathbb R/(\frac12\mathbb Z)$ with a positive overlap depending only on $M$.
\end{claim}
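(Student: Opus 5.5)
The key observation is that the interval in \eqref{eq:root-interval} is attached to the moving base point $Q=Lk+s$, so its true position in $\mathbb R/(\tfrac12\mathbb Z)$ depends on $s$ through both $Q$ and the term $-3Ls/(2L)$. I would therefore first rewrite \eqref{eq:root-interval} in absolute terms. Modulo $\tfrac12$, the terms $Lk$ and $t/2$ disappear, since $Lk$ is an integer and changing $t$ shifts by multiples of $\tfrac12$. The interval becomes
\[
 s-\frac{3s}{2}+\left[\frac{\alpha_M-2M}{2L},\,\frac{\alpha_M+(M-1)r}{2L}\right]
 =-\frac{j}{4MP}+\left[\frac{\alpha_M-2M}{2L},\,\frac{\alpha_M+(M-1)r}{2L}\right],
\]
using $s=j/(2MP)$. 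Hence, as $j$ runs over $0,1,\ldots,2MP-1$, the left endpoints are the fixed number $(\alpha_M-2M)/(2L)$ shifted by the multiples of $1/(4MP)$. These $2MP$ points are equally spaced around the circle $\mathbb R/(\tfrac12\mathbb Z)$, whose length is exactly $2MP\cdot\frac1{4MP}$.

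Next I would compute the interval lengths, using from Claim~\ref{clm:offsets} that $r=1$ for even $j$ and $r=3/2$ for odd $j$. The length is $((M-1)r+2M)/(2L)$.
\begin{itemize}
\item For $r=1$ it equals $(3M-1)/(2MP(3M-1))=\frac1{2MP}$.
\item For $r=3/2$ it equals $(7M-3)/(4MP(3M-1))$, which is at least $\frac1{2MP}$ because $7M-3\ge 2(3M-1)$ for $M\ge1$.
\end{itemize}
So every interval has length at least $\frac1{2MP}$, twice the spacing $\frac1{4MP}$.

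Each interval therefore contains its own left endpoint and the next one together with a neighbourhood of length $\frac1{4MP}$ beyond it. Consecutive intervals (cyclically, with $j=2MP-1$ followed by $j=0$ shifted by $\tfrac12$) thus overlap in a segment of length at least $\frac1{4MP}$, which depends only on $M$. This gives the claimed cover of the circle with positive overlap.

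The main obstacle I expect is the bookkeeping in the first step: correctly accounting for the $s$ hidden inside $Q$, so that the net phase shift is $-s/2$ rather than $-3s/2$. With the $-3s/2$ shift the spacing would be $3/(4MP)$, and the argument would instead require a coprimality discussion of $3$ and $2MP$. I would double-check this against the definition of $N_{\max}$, where $Q^2$ was expanded around $Q=Lk+s$.

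The positive overlap is also what allows the $O_{M,t}(k^{-1})$ inward shifts of the endpoints to be absorbed for large $k$. Only finitely many $t$ are needed to cover one period, so these error constants are uniform.
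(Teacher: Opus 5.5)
Your proof is correct, and its overall strategy matches the paper's. You reduce modulo $\tfrac12$ and observe that $Q=Lk+s$ and the term $-3Ls/(2L)$ combine to a net shift of $-s/2=-j/(4MP)$. That bookkeeping is right and agrees with the paper's formula for the right endpoints $R_j$. You then compare the spacing $1/(4MP)$ with the interval length $(2M+(M-1)r)/(2L)\ge(3M-1)/(2L)=1/(2MP)$.

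The real difference is the choice of anchor, and it is a genuine simplification. The paper anchors at the right endpoints, which carry the $r$-dependent term $(M-1)r/(2L)$. It therefore has to argue that this perturbation, of width $(M-1)/(2L)$ and smaller than the spacing, preserves the cyclic order. It then bounds the gaps by $1/(4MP)+(M-1)/(2L)$, obtains the overlap $\frac{M+1}{4MP(3M-1)}$, and finally invokes the Lebesgue number lemma to get a uniform margin.

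You anchor at the left endpoints, which do not involve $r$ at all, so they are exactly the $2MP$ equally spaced points of the circle. This has three consequences:
\begin{itemize}
\item no perturbation or cyclic-order argument is needed;
\item only $r\ge1$ is used, not the parity information;
\item the overlap is the larger value $1/(4MP)$.
\end{itemize}
Your version also gives the margin explicitly. Every point of the circle lies in some interval at distance at least $1/(8MP)$ from both of its endpoints: use the interval starting at the nearest anchor to its left, or the one before it. This makes the Lebesgue-number step unnecessary.

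One cosmetic point: the left endpoints decrease as $j$ increases, so ``the next one'' should be read in the spatial order of the anchors rather than in index order. Since the anchors form the full set of equally spaced points and every interval extends in the same direction from its anchor, this does not affect the argument.
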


\begin{proof}
    Fix $t$. For the interval in \eqref{eq:root-interval} corresponding to phase $j$, let \(R_j\) be its right endpoint on the real line. We shall consider these right endpoints modulo \(1/2\). Since \(Q=Lk+s\), \(s=j/(2MP)\), and \(Lt/(2L)=t/2\), we have
    $$R_j\equiv \frac{\alpha_M}{2L}-\frac{j}{4MP}+\frac{(M-1)r}{2L}\pmod{1/2}.$$
    Thus \(R_j\) is obtained, modulo \(1/2\), from the equally spaced point $R_j^0=\alpha_M/(2L)-j/(4MP)$ by adding the perturbation $(M-1)r/(2L)$.

    The points \(R_j^0\) are equally spaced on \(\mathbb R/(\frac12\mathbb Z)\) with spacing \(1/(4MP)\). Since \(1\le r<2\), the perturbation lies in \([ (M-1)/(2L), (M-1)/L )\), whose width is \((M-1)/(2L)\). This width is smaller than the spacing, because
    \[
        \frac{(M-1)/(2L)}{1/(4MP)} = \frac{2(M-1)}{3M-1} <1.
    \]
The difference between the perturbations of any two points is therefore smaller than the original spacing $1/(4MP)$. Hence the perturbations do not change the cyclic order of the right endpoints. It follows that every gap between two consecutive right endpoints
is at most \(1/(4MP)+(M-1)/(2L)\).

On the other hand, the interval in \eqref{eq:root-interval} extends to the left from its right
endpoint by \((2M+(M-1)r)/(2L)\), which is at least \((3M-1)/(2L)\). This is
strictly larger than the maximal possible gap, since
\[
 \frac{3M-1}{2L} - \frac{1}{4MP} - \frac{M-1}{2L} = \frac{M+1}{4MP(3M-1)} >0.
\]
Therefore, in the cyclic ordering of the right endpoints, each interval extends to the left beyond the previous right endpoint by at least $\eta_M:=\frac{M+1}{4MP(3M-1)}>0$.  Thus their interiors form an open cover of the circle $\mathbb{R}/\left(\frac12\mathbb{Z}\right)$.
By the Lebesgue number lemma, there exists $\delta_M>0$ such that every point of the circle has a $\delta_M$-neighborhood contained in the interior of one of the intervals. Consequently, every point lies in one of the intervals at distance at least $\delta_M$ from both endpoints.
\end{proof}

We now realize an arbitrary sufficiently large integer $n$. Let $x=\sqrt n$, set $k=\lfloor x/L\rfloor$, and put $y=x-Lk\in[0,L)$. For each phase $j$, let $I_j$ be the interval obtained from \eqref{eq:root-interval} by taking $t=0$ and subtracting $Lk$ from both endpoints. Replacing $t$ by $t+1$ translates $I_j$ by exactly $1/2$. By Claim~\ref{clm:circle-covering}, the images of the intervals $I_j$ cover $\mathbb R/(\frac12\mathbb Z)$ with a uniform positive overlap. Hence there exist a phase $j$ and an integer $t$ such that $y\in I_j+t/2$, with $y$ at distance at least $\delta_M$ from both endpoints. Since $y\in[0,L)$ and the finitely many intervals $I_j$ lie in a fixed bounded set depending only on $M$, the integer $t$ may be chosen with $|t|\le T_M$ for some constant $T_M$ depending only on $M$.

There are now only finitely many relevant pairs $(j,t)$.  Fixing one set of offsets for each of them gives $A,B,H,G=O_M(1)$ uniformly, so all occurrences of $O_{M,t}(1)$ and $O_{M,t}(k^{-1})$ above may henceforth be replaced by $O_M(1)$ and $O_M(k^{-1})$.  For sufficiently large $k$, the $O_M(k^{-1})$ endpoint errors and the effects of the floor functions, measured on the square-root scale, are smaller than $\delta_M$. Hence $n$ itself lies in the exact deletion interval of Claim~\ref{clm:deletion-rule}.

It remains only to check the bounded-deletion condition in Claim~\ref{clm:deletion-rule}. Since only finitely many pairs $(j,t)$ are relevant, the ratios $S_X/a$ and $S_Y/b$ are uniformly bounded by a constant depending only on $M$. Choose an integer $D=D(M)$ larger than this bound.  For all sufficiently large $k$,
\[
 D\le\min\{h,g\},
 \qquad
 b+h-D\ge Q,
 \qquad
 a+g-D\ge Q,
\]
because $b+h-Q=MPk+O_M(1)$ and $a+g-Q=MP(3M-2)k+O_M(1)$.
Claim~\ref{clm:deletion-rule} therefore gives a graph $G$ on exactly $n$ vertices with $q(G)\ge Q$.

Finally, since $Q-\sqrt n=O_M(1)$, $Q^2=n+2(Q-\sqrt n)\sqrt n+O_M(1)$. 
Together with $m=n+ab-a-b$ and $ab-a-b=Q^2-(3-1/P)Q+O_M(1)$, this gives 
\begin{align*}
 \frac{q(G)}{\avd(G)}
 &\ge \frac{nQ}{2m} = \frac{\sqrt n}{4}\,
 \frac{1+(Q-\sqrt n)/\sqrt n}
 {1+\bigl(Q-\sqrt n-(3-1/P)/2\bigr)/\sqrt n+O_M(n^{-1})}\\
 &=\frac14\sqrt n+\frac38-\frac1{8(2M-1)}+O_M(n^{-1/2}).
\end{align*}
By the choice of $M$, the fixed loss $1/[8(2M-1)]$ is smaller than $\varepsilon/2$. Taking $n$ sufficiently large also ensures that the absolute value of the $O_M(n^{-1/2})$ term is smaller than $\varepsilon/2$. This proves the lower bound.
\end{proof}
\section{Concluding remarks}\label{sec:concluding}
Theorem~\ref{thm:main} settles the asymptotic problem posed by Tuza, including the additive constant.  The upper and lower bounds rely on rather different mechanisms: the upper bound is obtained through a weighted singular-value comparison on the non-leaf core, whereas the lower bound uses a finite arithmetic-phase construction to realize every sufficiently large order.

The same framework suggests finer questions concerning the stability of near-extremal graphs and terms beyond the constant term.  Some of the spectral identities and arithmetic constructions above appear capable of supporting such refinements, but pursuing them would obscure the main argument.  We therefore leave these questions open and retain the clean asymptotic statement $F_{\bip}(n)=\frac14\sqrt n+\frac38+o(1)$.
\section*{Declaration on the use of AI}
The authors used generative AI tools to assist in discussing proof strategies, checking proofs, and improving exposition. All mathematical arguments, results, and conclusions were reviewed and verified by the authors.

\bibliographystyle{abbrv}
\bibliography{bipartite}

\end{document}